\newtheorem{theorem}{Theorem}[section]
\newtheorem{lemma}[theorem]{Lemma}
\theoremstyle{definition}
\newtheorem{remark}[theorem]{Remark}
\begin{document}
\title[]{Invariant projections for operators that are free over the diagonal}
\author[]{Serban Teodor Belinschi}
\begin{abstract}
Motivated by recent work of Au, C\'ebron, Dahlqvist, Gabriel, and Male, we study regularity properties
of the distribution of a sum of two selfadjoint random variables in a tracial noncommutative probability 
space which are free over a commutative algebra. We give a characterization of the invariant projections of 
such a sum in terms of the associated subordination functions.
\end{abstract}
\maketitle

\section{Introduction}

Voiculescu's analytic theory of operator-valued free probability \cite{V1995,V2000,FreeMarkov} proved 
numerous times its essential role in the study of operator-valued distributions and freeness
with amalgamation, and in their applications to random matrix theory (see, for instance, \cite{Sh,
HT,M,BMS,HRS}). Recently, a new application of freeness with amalgamation to random matrix
theory has been found by Au, C\'ebron, Dahlqvist, Gabriel, and Male: they show in \cite{ACDGM}
that independent permutation-invariant matrices are asymptotically free with amalgamation over 
the diagonal \cite[Theorems 1.2, 2.2]{ACDGM}. Motivated mainly by this result, we investigate
in this short note the free additive convolution of operator-valued distributions with 
values in a commutative von Neumann algebra.

More specifically, we consider a tracial von Neumann algebra $(\mathcal A,\tau)$ containing
an Abelian von Neumann subalgebra $\mathcal L$, and the unique trace-preserving
conditional expectation $E\colon\mathcal A\to\mathcal L$. We assume that $X=X^*,
Y=Y^*\in\mathcal A$ are free with amalgamation over $\mathcal L$. We assume that
$X+Y$ has a nonzero invariant projection: there exists $a\in\mathbb R$ and $p=p^*=p^2\in
\mathcal A\setminus\{0,1\}$ such that $(X+Y)p=p(X+Y)=ap$. We ask whether this hypothesis
imposes the existence of an invariant projection of $X$ and/or $Y$. This question was first answered 
in the case of scalar-valued distributions (i.e. when $\mathcal L=\mathbb C\cdot1$) by 
Bercovici and Voiculescu in \cite{BVReg}: the existence of $p$ requires the existence of
an invariant projection $q$ for $X$ ($Xq=qX=a_1q$) and $r$ for $Y$ ($Yr=rY=a_2r$) such that 
$\tau(p)+1=\tau(q)+\tau(r)$ and $a=a_1+a_2$ (see \cite[Theorem 7.4]{BVReg}). The proof 
uses the analytic subordination functions of Voiculescu and Biane \cite{V3,B}.

In this note, we provide a characterization in terms of boundary properties of Voiculescu's 
operator-valued subordination functions \cite{V2000,FreeMarkov} of elements $X,Y$ for 
which the above hypothesis is satisfied (see Theorem \ref{main} below). 
Our result is nowhere near as satisfying as \cite[Theorem 7.4]{BVReg}, but one could not
reasonably expect it to be: the reader is invited to consider the case when $\mathcal L$ is
isomorphic to the von Neumann algebra $L^\infty([0,1])$ and recall that any two real-valued
elements in $\mathcal L$ are tautologically free, in order to construct a simple example of
elements $X,Y\in\mathcal L$ which are not constant on any Borel set of positive measure, but
whose sum is constant on any desired Borel set of positive measure. 

In recent years there were numerous results on the {\em lack} of invariant projections 
\cite{SS,CS,MSW,BM}, as well as the occurence of ``trivial'' (in the above sense) invariant
projections \cite{SS,MSY}. As of now, we are not aware of results that indicate the existence
and properties of invariant projections for $X,Y$.

\section{Analytic tools}

Consider a tracial von Neumann algebra $(\mathcal A,\tau)$, and assume that $\mathcal L$
is an Abelian von Neumann subalgebra of $\mathcal A$. We shall assume throughout the paper
that $\mathcal A$ acts on the Hilbert space $\mathcal H:=L^2(\mathcal A,\tau)$, which is the completion 
of $\mathcal A$ with respect to the inner product $\langle \xi,\eta\rangle=\tau(\eta^*\xi)$.
It is known (see, for instance, \cite{Takesaki1}) that there exists a unique trace-preserving 
conditional expectation $E\colon\mathcal A\to\mathcal L$ which is the restriction to
$\mathcal A$ of the orthogonal projection from $L^2(\mathcal A,\tau)$ onto $L^2(\mathcal L,\tau)$.
If $T\in\mathcal A$, we write $T\ge0$ if $T=T^*$ and the spectrum $\sigma(T)\subseteq[0,+\infty)$,
and we write $T>0$ to signify that $T\ge0$ and $\sigma(T)\subseteq(0,+\infty)$. For any
$T\in\mathcal A$, there exists a decomposition in real and imaginary parts: $T=\Re T+i\Im T$, where
$\Re T=\frac{T+T^*}{2}$ and $\Im T=\frac{T-T^*}{2i}$. We define 
$H^+(\mathcal A)=\{T\in\mathcal A\colon\Im T>0\}$, and similar for $\mathcal L$ and any
other von Neumann subalgebra of $\mathcal A$.

Assume that $X=X^*,Y=Y^*\in\mathcal A$ are free over $\mathcal L$ with respect to $E$
(see \cite{V1995}). Define the analytic map
$$
G_X\colon H^+(\mathcal L)\to H^-(\mathcal L),\quad
G_X(b)=E\left[(b-X)^{-1}\right].
$$
As shown in \cite{FAQ2}, $G_X$ is a free noncommutative map in the sense of \cite{KVV},
whose matricial extension fully encodes the distribution of $X$ with respect to $E$. It is also
known that $G_X$ extends to a ``neighborhood of infinity:'' if $\|b^{-1}\|<\|X\|^{-1}$, then 
$G_X(b)=\sum_{n=0}^\infty E\left[b^{-1}(Xb^{-1})^n\right]$ converges in norm, so $w\mapsto
G_X(w^{-1})$ extends as an analytic map to the ball of center zero and radius $1/\|X\|$.

Let $\mathcal L\langle X\rangle$ denote the von Neumann algebra generated by $\mathcal L$ and $X$.
Denote by $E_X\colon\mathcal A\to\mathcal L\langle X\rangle$ the unique trace-preserving conditional 
expectation from $\mathcal A$ to $\mathcal L\langle X\rangle$. It is shown in \cite{V2000} that
there exists a free noncommutative analytic map $\omega_1\colon H^+(\mathcal L)\to H^+(\mathcal L)$
such that
\begin{equation}\label{Vsubord}
E_X\left[(b-X-Y)^{-1}\right]=(\omega_1(b)-X)^{-1},\quad b\in H^+(\mathcal L)\text{ or }
\|b^{-1}\|<\|X+Y\|^{-1}.
\end{equation}
A similar statement holds for a map $\omega_2$, with $X$ and $Y$ interchanged. By applying $E$ 
to \eqref{Vsubord} and using Voiculescu's $R$-transform \cite{V1995,FAQ2}, it is shown in
\cite{BMS} that 
\begin{equation}\label{Bsubord}
G_{X+Y}(b)^{-1}=G_X(\omega_1(b))^{-1}=G_Y(\omega_2(b))^{-1}=\omega_1(b)+\omega_2(b)-b,
\quad b\in H^+(\mathcal L).
\end{equation}
(See \cite{BVReg} for the scalar version of this relation.) Obviously, the above relation extends to $b$ such 
that $\|b^{-1}\|<\|X+Y\|^{-1}$. It is also shown in \cite{V2000,BPV} that
\begin{equation}\label{bigr}
\Im\omega_j(b)\ge\Im b,\quad\omega_j(b^*)=\omega_j(b)^*,\quad b\in H^+(\mathcal L),j=1,2.
\end{equation}
Given that $\mathcal L$ is a commutative von Neumann algebra, hence isomorphic to an algebra
of functions, we shall often write in the following $1/b$ or $\frac1b$ instead of $b^{-1}$ for multiplicative
inverses of elements of $\mathcal L$.

As mentioned in the introduction, we shall be concerned with invariant projections for $X+Y$. In 
the following, we characterize these objects in terms of resolvents. Thus, assume $T=T^*\in\mathcal A$.
Denote by $\displaystyle\lim_{\stackrel{z\longrightarrow a}{\sphericalangle}}$ the limit as
$z$ approaches $a\in\mathbb R$ from the complex upper half-plane nontangentially to $\mathbb R$.

\begin{lemma}\label{pro}
If there exists a $p=p^*=p^2\in\mathcal A\setminus \{0\}$ and $a\in\mathbb R$ such that 
$$
\lim_{\stackrel{z\longrightarrow a}{\sphericalangle}}(z-a)(z-T)^{-1}=p
$$
in the strong operator {\rm (so)} topology, then $Tp=pT=ap$. Conversely, if $Tp=pT=ap$, then
$$
{\rm so\text{-}}\lim_{\stackrel{z\longrightarrow a}{\sphericalangle}}(z-a)(z-T)^{-1}=p.
$$
\end{lemma}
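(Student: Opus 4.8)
The plan is to exploit the spectral theorem for the self-adjoint operator $T$, which reduces the statement to a scalar computation on the spectral measure. Since $T = T^*$, we may write $T = \int_{\sigma(T)} \lambda \, dE_\lambda$, where $E_\lambda$ is the associated projection-valued measure; then $(z-a)(z-T)^{-1} = \int \frac{z-a}{z-\lambda}\, dE_\lambda$. For a fixed vector $\xi \in \mathcal H$, the quantity $\langle (z-a)(z-T)^{-1}\xi, \xi\rangle = \int \frac{z-a}{z-\lambda}\, d\mu_\xi(\lambda)$, where $\mu_\xi$ is the (finite, positive) spectral measure $\langle E_\lambda \xi, \xi\rangle$. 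The elementary fact I would invoke is that for a finite positive Borel measure $\mu$ on $\mathbb R$, the nontangential limit $\lim_{z \to a, \sphericalangle} (z-a)\int \frac{d\mu(\lambda)}{z-\lambda}$ equals $\mu(\{a\})$; this is the standard characterization of the atom of $\mu$ at $a$ via its Cauchy transform, and on the diagonal it gives convergence of $\langle (z-a)(z-T)^{-1}\xi,\xi\rangle$ to $\langle p_a \xi, \xi\rangle$ where $p_a = E(\{a\})$ is the spectral projection of $T$ at the point $a$.

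First I would establish the converse direction, which is cleaner. If $Tp = pT = ap$ with $p$ a projection, then $p$ commutes with $T$, hence with every bounded Borel function of $T$, and in particular with $(z-T)^{-1}$ for $z \notin \mathbb R$; moreover $(z-T)^{-1}p = (z-a)^{-1}p$ since $Tp = ap$ forces $(z-T)p = (z-a)p$. Writing $(z-a)(z-T)^{-1} = (z-a)(z-T)^{-1}p + (z-a)(z-T)^{-1}(1-p)$, the first term is identically $p$, so it suffices to show the second term tends to $0$ strongly. Compressing to $(1-p)\mathcal A(1-p)$, the operator $T(1-p)$ is self-adjoint with spectral measure supported off... well, not necessarily off $a$, but its spectral projection at $a$ is now $0$; so for any $\xi$ the spectral measure $\mu_{(1-p)\xi}$ has no atom at $a$, and the scalar limit fact gives $\langle (z-a)(z-T)^{-1}(1-p)\xi, (1-p)\xi\rangle \to 0$. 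Since these operators are uniformly bounded on each Stolz angle (because $|z-a|/|z-\lambda| \le C_\theta$ uniformly for $z$ in a Stolz angle at $a$ and $\lambda$ real — this is exactly why nontangential approach is needed), and since $(z-a)(z-T)^{-1}(1-p)$ is a normal-operator-valued family with $\langle \cdot \, \xi, \xi\rangle \to 0$, we get strong convergence to $0$.

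For the forward direction, suppose $\mathrm{so\text{-}}\lim (z-a)(z-T)^{-1} = p$ with $p$ a nonzero projection. Testing against vectors and using the scalar fact, $\langle p\xi, \xi\rangle = \mu_\xi(\{a\}) = \langle E(\{a\})\xi, \xi\rangle$ for all $\xi$, so $p = E(\{a\})$ is the spectral projection of $T$ at $a$; in particular $p$ commutes with $T$ and $Tp = pT = ap$ by the spectral theorem. (Alternatively, without identifying $p$ as a spectral projection: from strong convergence and $(z-T)(z-a)(z-T)^{-1} = (z-a)$, pass to the limit — using that $z \mapsto (z-a)(z-T)^{-1}$ is uniformly bounded on Stolz angles so multiplication by the fixed bounded operator $T$ is so-continuous along the net — to get $(z-a)(z-T)^{-1} \cdot (z-a) = (z-a) \cdot$ something... more carefully, multiply $(z-a)(z-T)^{-1}$ on the left by $(z-T) = (z-a) - (T - a)$ and rearrange to obtain $(z-a)\,[\,(z-a)(z-T)^{-1} - 1\,] = (T-a)(z-a)(z-T)^{-1}$; since the right side converges strongly to $(T-a)p$ and one checks the left side, which equals $(z-a)^2(z-T)^{-1} - (z-a)$, converges strongly to $0$ along the nontangential net because of the uniform bound and the fact that $(z-a) \to 0$ while $(z-a)(z-T)^{-1}$ stays bounded — wait, $(z-a)^2(z-T)^{-1} = (z-a)\cdot (z-a)(z-T)^{-1}$ where the second factor is bounded and the first scalar factor $\to 0$, so this product $\to 0$ strongly, and $(z-a) \to 0$ — we conclude $(T-a)p = 0$, i.e. $Tp = ap$; taking adjoints and using $T = T^*$, $p = p^*$ gives $pT = ap$.)

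The main obstacle, and the only point requiring genuine care, is the uniform boundedness of $(z-a)(z-T)^{-1}$ as $z \to a$ nontangentially: for $z = a + w$ with $w$ in a Stolz angle at $0$, one has $\|(z-a)(z-T)^{-1}\| = \sup_{\lambda \in \sigma(T)} |w|/|w + a - \lambda| \le \sup_{t \in \mathbb R} |w|/|w - t|$, and for $w$ restricted to a fixed Stolz angle $\{|\arg w - \pi/2| < \pi/2 - \theta\}$ this supremum is bounded by $1/\sin\theta$; this uniform bound is what lets us upgrade the weak (i.e., bilinear-form) convergence to strong convergence and, in the converse direction, justify passing $T$ through the limit. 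Everything else is the classical Fatou/Cauchy-transform computation for atoms of positive measures on $\mathbb R$, which I would either cite or dispatch in a line.
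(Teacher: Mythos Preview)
Your approach via the spectral theorem is essentially the paper's, and most of your argument is sound; however, there is one genuine slip in the converse direction. You assert that because $(z-a)(z-T)^{-1}(1-p)$ is a uniformly bounded family of normal operators with $\langle\,\cdot\,\xi,\xi\rangle\to 0$, it converges strongly to $0$. That inference is false in general: take $T$ to be multiplication by $x$ on $L^2[0,1]$ and $N_n$ multiplication by $e^{2\pi i n x}$; these are unitary (hence normal), uniformly bounded, and $\langle N_n\xi,\xi\rangle\to 0$ for all $\xi$ by Riemann--Lebesgue, yet $\|N_n\xi\|=\|\xi\|$ for every $n$. Weak convergence plus normality plus a uniform bound does not give strong convergence.

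The fix is exactly what the paper does: instead of the diagonal matrix element $\int\frac{z-a}{z-\lambda}\,d\mu_\xi(\lambda)$, compute the squared norm
\[
\left\|(z-a)(z-T)^{-1}\xi\right\|_2^2=\int_{\mathbb R}\frac{(x-a)^2+y^2}{(x-t)^2+y^2}\,d\mu_{\xi,T}(t),
\]
which tends to $\mu_{\xi,T}(\{a\})$ by dominated convergence---your Stolz-angle bound $|z-a|/|z-\lambda|\le 1/\sin\theta$ supplies the dominating constant. Applied on the range of $1-p$, where the spectral measure has no atom at $a$, this gives $\|(z-a)(z-T)^{-1}(1-p)\xi\|_2\to 0$ directly, which is the strong convergence you need. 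Equivalently, you may invoke the functional-calculus fact that if $f_z\to f$ pointwise on $\sigma(T)$ with $\sup_z\|f_z\|_\infty<\infty$, then $f_z(T)\to f(T)$ strongly; your pointwise limit $\frac{z-a}{z-\lambda}\to\chi_{\{a\}}(\lambda)$ together with the Stolz bound is precisely this hypothesis. Your treatment of the forward direction (both the identification $p=E(\{a\})$ and the algebraic alternative) is correct.
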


\begin{proof}
The essential part of the proof can be found for instance in \cite{BVReg}. We sketch it here for convenience.
For any vector $\xi\in\mathcal H$ of $L^2$-norm equal to one, we write
\begin{eqnarray*}
\left\|(z-a)(z-T)^{-1}\xi\right\|_2^2 & = & \left\langle (z-a)(z-T)^{-1}\xi,(z-a)(z-T)^{-1}\xi
\right\rangle\\
& = & \left\langle\left((x-a)^2+y^2\right)\left((x-T)^2+y^2\right)^{-1}\xi,\xi\right\rangle\\
& = & \int_{\mathbb R}\frac{(x-a)^2+y^2}{(x-t)^2+y^2}\,{\rm d}\mu_{\xi,T}(t),
\end{eqnarray*}
where $z=x+iy$ is the decomposition in real and imaginary parts of $z$ and $\mu_{\xi,T}$
is the distribution of the selfadjoint random variable $T$ with respect to the expectation (state) 
$\cdot\mapsto\langle\cdot\xi,\xi\rangle$. The dominated convergence theorem guarantees that 
$$
\lim_{\stackrel{z\longrightarrow a}{\sphericalangle}}
\int_{\mathbb R}\frac{(x-a)^2+y^2}{(x-t)^2+y^2}\,{\rm d}\mu_{\xi,T}(t)=\mu_{\xi,T}(\{0\}),
$$
allowing us to conclude.
\end{proof}

\begin{remark}\label{E}
The above lemma together with the weak operator continuity of $E,E_X$ allows us to conclude that
$$
\lim_{\stackrel{z\longrightarrow a}{\sphericalangle}}(z-a)E\left[(z-T)^{-1}\right]=E[p],\quad
\lim_{\stackrel{z\longrightarrow a}{\sphericalangle}}(z-a)E_X\left[(z-T)^{-1}\right]=E_X[p],
$$
in the so topology. Similarly, we have 
$$
{\rm so\text{-}}\lim_{\stackrel{z\longrightarrow a}{\sphericalangle}}\Re(z-a)(z-T)^{-1}=p,\quad
{\rm so\text{-}}\lim_{\stackrel{z\longrightarrow a}{\sphericalangle}}\Im(z-a)(z-T)^{-1}=0.
$$
In particular,
$$
{\rm so\text{-}}\lim_{y\searrow0}y(a-T)\left((a-T)^2+y^2\right)^{-1}=0,\quad
{\rm so\text{-}}\lim_{y\searrow0}y^2\left((a-T)^2+y^2\right)^{-1}=p.
$$
\end{remark}

We need one more (very simple) fact about the functions that behave like $\omega$. 

\begin{lemma}\label{conv}
Assume that $f\colon H^+(\mathbb C)\to H^+(\mathcal L)$ is a free noncommutative function
in the sense of \cite{KVV}. For any $a\in\mathbb R$, the {\rm so} limit
$$
\lim_{y\searrow0}\frac{y}{\Im f(a+iy)}
$$
exists and is finite.
\end{lemma}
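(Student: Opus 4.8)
The plan is to use the commutativity of $\mathcal L$ to reduce the statement to a classical monotonicity property of scalar Nevanlinna--Pick functions, and then to recover strong-operator convergence from the fact that a bounded monotone net of positive operators converges. First I would identify $\mathcal L$ with $L^\infty(\Omega,\nu)$ for some probability space $(\Omega,\nu)$. Since $f$ is analytic as an $\mathcal L$-valued map on $H^+(\mathbb C)$, for $\nu$-almost every $\omega\in\Omega$ the pointwise evaluation $g_\omega(z):=f(z)(\omega)$ is an analytic scalar function on $H^+(\mathbb C)$; and since $\Im f(z)>0$ in $\mathcal L$ for every $z$ (so $(\Im f(z))(\omega)>0$ for a.e.\ $\omega$), after fixing the exceptional null set uniformly over a countable dense subset of $H^+(\mathbb C)$ one gets that, for a.e.\ $\omega$, $\Im g_\omega$ is a positive harmonic function on $H^+(\mathbb C)$, i.e.\ $g_\omega$ is a Nevanlinna--Pick function. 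In particular $\Im f(a+iy)>0$ is invertible in $\mathcal L$, so $\frac{y}{\Im f(a+iy)}:=y\,(\Im f(a+iy))^{-1}$ is a well-defined positive element of $\mathcal L$ whose value at $\omega$ equals $y/\Im g_\omega(a+iy)$.

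The scalar input is then just the Herglotz (Poisson) representation of the positive harmonic function $\Im g_\omega$ on the half-plane: there are $\beta_\omega\ge0$ and a positive Borel measure $\rho_\omega$ on $\mathbb R$ with $\int_{\mathbb R}(1+t^2)^{-1}\,{\rm d}\rho_\omega(t)<\infty$ such that
$$
\Im g_\omega(a+iy)=\beta_\omega y+\int_{\mathbb R}\frac{y}{(t-a)^2+y^2}\,{\rm d}\rho_\omega(t),\qquad y>0 .
$$
Dividing by $y$, the quantity $\frac1y\Im g_\omega(a+iy)=\beta_\omega+\int_{\mathbb R}\frac{{\rm d}\rho_\omega(t)}{(t-a)^2+y^2}$ is manifestly non-increasing in $y>0$ (and strictly positive). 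Hence $y\mapsto y/\Im g_\omega(a+iy)$ is non-decreasing on $(0,\infty)$, and for $y\in(0,1]$ it is bounded above by $1/\Im g_\omega(a+i)=1/(\Im f(a+i))(\omega)$.

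Both conclusions hold for a.e.\ $\omega$, hence as relations in $\mathcal L$: for $0<y_1\le y_2\le 1$ one has $\frac{y_1}{\Im f(a+iy_1)}\le\frac{y_2}{\Im f(a+iy_2)}$, and $0\le\frac{y}{\Im f(a+iy)}\le(\Im f(a+i))^{-1}$, where $(\Im f(a+i))^{-1}$ is a norm-bounded positive element of $\mathcal L$ because $\Im f(a+i)>0$. Thus $\bigl(y\,(\Im f(a+iy))^{-1}\bigr)_{0<y\le 1}$, ordered by letting $y\searrow 0$, is a decreasing net of positive operators bounded below by $0$; by the standard theorem that a bounded monotone net of self-adjoint operators converges in the strong operator topology, $\lim_{y\searrow0}\frac{y}{\Im f(a+iy)}$ exists in the so topology, lies in $\mathcal L$, and has norm at most $\|(\Im f(a+i))^{-1}\|$ --- in particular it is finite. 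I do not expect a genuine obstacle, since (as the paper remarks) this is a very simple fact; the only point requiring a little care is the bookkeeping in the reduction to scalars --- choosing the $\nu$-null exceptional set so that it serves all parameters $y$ at once (arranged via a countable dense set of $y$'s together with continuity and monotonicity in $y$) --- and the invocation of the monotone-net convergence theorem to pass from the a.e.\ pointwise monotone limit to a strong-operator limit in $\mathcal L$.
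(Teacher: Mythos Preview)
Your argument is correct and arrives at the same key monotonicity fact as the paper --- that $y\mapsto y/\Im f(a+iy)$ decreases as $y\searrow 0$, hence converges in the strong operator topology as a bounded monotone net --- via an integral (Herglotz/Nevanlinna) representation of $\Im f$. The difference lies in how that representation is obtained. You exploit the commutativity of $\mathcal L$ to identify it with $L^\infty(\Omega,\nu)$ and reduce to the scalar Herglotz formula pointwise in $\omega$, then reassemble the pointwise monotonicity into an operator inequality. The paper instead invokes the operator-valued Nevanlinna representation of Popa--Vinnikov and Williams directly, writing $f(z)=A+zB+\rho[(\mathcal X-z)^{-1}]$ with $\rho$ completely positive, and reads off monotonicity of $\Im f(a+iy)/y=B+\rho[((\mathcal X-a)^2+y^2)^{-1}]$ immediately from positivity of $\rho$. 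The paper's route therefore avoids the measure-theoretic bookkeeping about null sets that you flag, and --- as the paper itself remarks right after the proof --- does not use commutativity of $\mathcal L$ at all; your route is a bit more elementary in that it only requires the classical scalar theory, at the cost of being tied to the Abelian hypothesis on $\mathcal L$.
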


\begin{proof}
The proof is based on the representation of free noncommutative maps of noncommutative half-planes 
provided by \cite{PV,W}: there exists a completely positive map $\rho\colon\mathbb C\langle\mathcal X
\rangle\to\mathcal L$, an element $A=A^*$ and $B\ge0$ in $\mathcal L$ such that 
$$
f(z)=A+zB+\rho\left[(\mathcal X-z)^{-1}\right],\quad z\in H^+(\mathbb C).
$$
Then $\Im f(z)=\Im zB+\rho\left[(\mathcal X-z)^{-1}\Im z(\mathcal X-\bar{z})^{-1}\right]=
\Im zB+\rho\left[\frac{\Im z}{(\mathcal X-\Re z)^2+(\Im z)^2}\right]$.
Here $\mathcal X$ is a selfadjoint operator. Thus, 
$$
y(\Im f(a+iy))^{-1}=\left(B+\rho\left[(\mathcal X-a-iy)^{-1}(\mathcal X-a+iy)^{-1}\right]\right)^{-1}.
$$
Trivially the map $y\mapsto(\mathcal X-a-iy)^{-1}(\mathcal X-a+iy)^{-1}$ is decreasing. This
concludes the proof.
\end{proof}

Observe that commutativity of $\mathcal L$ plays no role in the proof of the previous lemma.

In this paper we shall make use also of the estimate 
\begin{eqnarray}
\lefteqn{
\left(\Im f(z)\right)^{-\frac12}(f(z)-f(w))\left(\Im f(w)\right)^{-1}(f(z)-f(w))^*\left(\Im f(z)\right)^{-\frac12}
}\nonumber\\
& \leq & \left\|\left(\Im z\right)^{-\frac12}(z-w)\left(\Im w\right)^{-\frac12}\right\|^2\label{estim},
\quad\quad\quad\quad\quad\quad\quad\quad\quad\quad
\end{eqnarray}
proven in \cite[Proposition 3.1]{LMS} for an arbitrary free noncommutative map $f$ between two 
noncommutative upper half-planes of two $C^*$-algebras. Since $\mathcal L$
is commutative, we sometimes write the above as
$$
\frac{(f(z)-f(w))(f(z)-f(w))^*}{\Im f(z)\Im f(w)}\leq\left\|(\Im z)^{-\frac12}(z-w)(\Im w)^{-\frac12}\right\|^2.
$$

\section{Invariant projections}

Let us re-state our hypotheses: $(\mathcal A,\tau)$ is a tracial von Neumann algebra (with normal
faithful $\tau$), $\mathcal L\subset\mathcal A$ is an Abelian von Neumann subalgebra of $\mathcal A$, 
$E\colon\mathcal A\to\mathcal L$ is the unique trace-preserving conditional expectation from 
$\mathcal A$ to $\mathcal L$, and $X=X^*,Y=Y^*\in\mathcal A$ are two bounded selfadjoint 
random variables which are free with respect to $E$ over $\mathcal L$. Also, $\mathcal L\langle X\rangle$ 
(respectively $\mathcal L\langle Y\rangle$) is the von Neumann algebra generated by $\mathcal L$ and 
$X$ (respectively $\mathcal L$ and $Y$), and $E_X\colon\mathcal A\to\mathcal L\langle X\rangle$
(resp. $E_Y\colon\mathcal A\to\mathcal L\langle Y\rangle$) is the unique trace-preserving conditional
expectation from $\mathcal A$ onto $\mathcal L\langle X\rangle$ (resp. $\mathcal L\langle Y\rangle$).
Finally, $\mathcal A$ acts (faithfully) on the Hilbert space $\mathcal H:=L^2(\mathcal A,\tau)$, which is the 
completion of $\mathcal A$ with respect to the inner product $\langle \xi,\eta\rangle=\tau(\eta^*\xi)$.

We assume that there exists $a\in\mathbb R$ and $p=p^*=p^2\in\mathcal A\setminus\{0\}$ such that
$$
(X+Y)p=p(X+Y)=ap.
$$
As seen in Lemma \ref{pro}, we have so-$\displaystyle\lim_{\stackrel{z\longrightarrow a}{\sphericalangle}}
(z-a)(z-X-Y)^{-1}=p$, and, by Remark \ref{E},
$$
\lim_{\stackrel{z\longrightarrow a}{\sphericalangle}}(z-a)E\left[(z-X-Y)^{-1}\right]=E[p],\quad
\lim_{\stackrel{z\longrightarrow a}{\sphericalangle}}(z-a)E_X\left[(z-X-Y)^{-1}\right]=E_X[p].
$$
(Obviously a similar statement holds if we interchange $X$ and $Y$.) Using \eqref{Vsubord} and the above, 
\begin{eqnarray}
E_X[p] & = & \lim_{\stackrel{z\longrightarrow a}{\sphericalangle}}(z-a)E_X\left[(z-X-Y)^{-1}\right]\nonumber\\
& = & \lim_{\stackrel{z\longrightarrow a}{\sphericalangle}}(z-a)(\omega_1(z)-X)^{-1}\nonumber\\
& = & \lim_{\stackrel{z\longrightarrow a}{\sphericalangle}}\frac{(z-a)}{\sqrt{\Im\omega_1(z)}}
\left(i-\frac{1}{\sqrt{\Im\omega_1(z)}}(X-\Re\omega_1(z))\frac{1}{\sqrt{\Im\omega_1(z)}}\right)^{-1}
\frac{1}{\sqrt{\Im\omega_1(z)}}\nonumber\\
& = & \lim_{y\searrow0} \left[\sqrt{\frac{y}{\Im\omega_1(a+iy)}}\right.\nonumber\\
& & \mbox{}\times iy
\left(iy-\sqrt{\frac{y}{\Im\omega_1(a+iy)}}(X-\Re\omega_1(a+iy))\sqrt{\frac{y}{\Im\omega_1(a+iy)}}\right)^{-1}\nonumber\\
& & \mbox{}\times\left.\sqrt{\frac{y}{\Im\omega_1(a+iy)}}\right].\label{huh}
\end{eqnarray}
Applying $E$ to the above yields
\begin{eqnarray}
E[p] & = &\lim_{y\searrow0} \frac{y}{\Im\omega_1(a+iy)}\nonumber\\
& & \mbox{}\times iyE\left[
\left(iy-\sqrt{\frac{y}{\Im\omega_1(a+iy)}}(X-\Re\omega_1(a+iy))\sqrt{\frac{y}{\Im\omega_1(a+iy)}}\right)^{-1}\right].\label{uhu}
\end{eqnarray}
All limits take place in the so topology.

Using again Remark \ref{E} and the fact that 
$$
\Im\left((\omega_1(z)-X)^{-1}\right)=
-\left(\Im\omega_1(z)+(X-\Re\omega_1(z))(\Im\omega_1(z))^{-1}(X-\Re\omega_1(z))\right)^{-1},
$$
we obtain
\begin{eqnarray}
E_X[p] & = & \lim_{y\searrow0}yE_X\left[\frac{y}{(a-X-Y)^2+y^2}\right]=\nonumber
-\lim_{y\searrow0}y\Im E_X\left[(a+iy-X-Y)^{-1}\right] \\
& = & -\lim_{y\searrow0} y\Im(\omega_1(a+iy)-X)^{-1}\nonumber\\
& = & \lim_{y\searrow0} \sqrt\frac{y}{\Im\omega_1(a+iy)}\label{sieben}\\
& & \mbox{}\times 
\left(1+\left(\sqrt{\frac{y}{\Im\omega_1(a+iy)}}(X-\Re\omega_1(a+iy))\sqrt{\frac{y}{\Im\omega_1(a+iy)}}\right)^2\right)^{-1}\nonumber\\
& & \mbox{}\times\sqrt\frac{y}{\Im\omega_1(a+iy)}\nonumber\\ 
& \leq & \lim_{y\searrow0}\frac{y}{\Im\omega_1(a+iy)}.
\end{eqnarray}
Applying $E$ to the above yields
$$
E[p]\leq\lim_{y\searrow0}\frac{y}{\Im\omega_1(a+iy)}.
$$

\begin{remark}\label{3.1}
Ideally (as it will become clear from our proof below), we would wish that 
$\ker\lim_{y\searrow0}\frac{y}{\Im\omega_1(a+iy)}=\{0\}$. That is obviously implied by 
$\ker E[p]=\{0\}$. Observe that if $0\neq q=q^*=q^2=\ker E[p]$, then $E[qpq]=
qE[p]q=0$, which implies $\tau(qpq)=\tau(E[qpq])=\tau(0)=0$, so that $qpq=0$. Since $p$ is
also a projection, we conclude from the faithfulness of $\tau$ that $pq=qp=0$, so that $p\perp q$,
or, equivalently, $p\leq q^\perp$. This means that there exists a {\em nontrivial algebraic relation}
between an element from $\mathcal L\setminus\mathbb C\cdot1$, namely $q$, and an element 
from $\mathbb C\langle X+Y\rangle\setminus\mathbb C\cdot1$, namely $p$: $pq=qp=0$.

Conversely, let us assume that $o_1\!=\!\ker\lim_{y\searrow0}\frac{y}{\Im\omega_1(a+iy)}\neq\!\{0\}$.
Then $\ker E_X[p]\ge\ker\lim_{y\searrow0}\frac{y}{\Im\omega_1(a+iy)}$, so that there exists an 
element $o_1=o_1^*=o_1^2\in\mathcal L\setminus\mathbb C\cdot1$ such that $o_1$ and the element
$E_X[p]\in\mathbb C\langle X\rangle\setminus\mathbb C\cdot1$ satisfy a
 {\em nontrivial algebraic relation}: $o_1E_X[p]=E_X[p]o_1=0$.

\end{remark}

We study next the nontangential limit of the real part of $\omega_1$ (and thus also of $\omega_2)$
at $a$. A few steps in this proof will not depend on the commutativity of $\mathcal L$. Fix 
$c\in\mathbb R,c\ge2\|X+Y\|$ and $y'\in(0,+\infty).$ We use inequality \eqref{estim}, applied
to $f=\omega_1,z=c+iy',w=a+iy$ in order to write
\begin{eqnarray*}
\lefteqn{
\left[\frac{1}{\Im\omega_1(c+iy')}\right]^\frac12(\omega_1(c+iy')-\omega_1(a+iy))
\left[\frac{1}{\Im\omega_1(a+iy)}\right]}\\
& & \mbox{}\times(\omega_1(c+iy')-\omega_1(a+iy))^*
\left[\frac{1}{\Im\omega_1(c+iy')}\right]^\frac12\\
& \leq & \left\|\frac{1}{\sqrt{y'}}(c-a+iy-iy')\frac{1}{\sqrt{y}}\right\|^2.\quad\quad\quad\quad\quad\quad
\quad\quad\quad\quad\quad\quad
\end{eqnarray*}
This implies
\begin{eqnarray*}
\lefteqn{
(\omega_1(c+iy')-\omega_1(a+iy))
\left[\frac{y}{\Im\omega_1(a+iy)}\right](\omega_1(c+iy')-\omega_1(a+iy))^*\quad}\\
& \leq & \left\|c-a+iy-iy'\right\|^2\frac{\Im\omega_1(c+iy')}{y'}.\quad\quad\quad\quad\quad\quad\quad
\quad\quad\quad\quad\quad
\end{eqnarray*}
We know that $\omega_1$ is analytic around $c$ and takes selfadjoint values, so we may let $y'\to0$ to 
obtain
$$
(\omega_1(c)-\omega_1(a+iy))
\left[\frac{y}{\Im\omega_1(a+iy)}\right](\omega_1(c)-\omega_1(a+iy))^*
\leq\|c-a+iy\|^2\omega_1'(c)(1).
$$
Expanding in real and imaginary parts, we obtain
\begin{eqnarray*}
\lefteqn{
(\omega_1(c)-\Re\omega_1(a+iy))
\left[\frac{y}{\Im\omega_1(a+iy)}\right]
(\omega_1(c)-\Re\omega_1(a+iy))}\\
& & \mbox{}+y\Im\omega_1(a+iy)\ \leq\ \|c-a+iy\|^2\omega_1'(c)(1).\quad\quad\quad\quad
\end{eqnarray*}
We conclude that 
$$
\left\|(\omega_1(c)-\Re\omega_1(a+iy))
\left[\frac{y}{\Im\omega_1(a+iy)}\right]^\frac12\right\|\leq
\|c-a+iy\|\sqrt{\|\omega_1'(c)\|},
$$
so that, by elementary properties of the norm, and recalling that $\frac{y}{\Im\omega_1(a+iy)}\leq1$,
\begin{equation}\label{daleth}
\left\|\Re\omega_1(a+iy)
\left[\frac{y}{\Im\omega_1(a+iy)}\right]^\frac12\right\|\leq
\|c-a+iy\|\sqrt{\|\omega_1'(c)\|}+\|\omega_1(c)\|,
\end{equation}
independently of $y>0$. The bound from the above relation, while necessary, is not sufficient
for our purposes. We need to show that $\lim_{y\to0}\left[\frac{y}{\Im\omega_1(a+iy)}\right]^\frac12
\Re\omega_1(a+iy)\left[\frac{y}{\Im\omega_1(a+iy)}\right]^\frac12$ exists in the so topology and is 
finite. Clearly, this is implied by the existence of 
\begin{equation}\label{soo}
\textrm{so -}\lim_{y\to0}\left[\frac{y}{\Im\omega_1(a+iy)}\right]^\frac12
\omega_1(a+iy)\left[\frac{y}{\Im\omega_1(a+iy)}\right]^\frac12.
\end{equation}
We write:
\begin{eqnarray*}
\lefteqn{\left[\frac{y'}{\Im\omega_1(a+iy')}\right]^\frac12
\omega_1(a+iy')\left[\frac{y'}{\Im\omega_1(a+iy')}\right]^\frac12}\\
& & \mbox{}-
\left[\frac{y}{\Im\omega_1(a+iy)}\right]^\frac12
\omega_1(a+iy)\left[\frac{y}{\Im\omega_1(a+iy)}\right]^\frac12\\
& = & 
\left[\frac{y'}{\Im\omega_1(a+iy')}\right]^\frac12
\omega_1(a+iy')\left(\left[\frac{y'}{\Im\omega_1(a+iy')}\right]^\frac12-
\left[\frac{y}{\Im\omega_1(a+iy)}\right]^\frac12\right)\\
& & \mbox{}+\left[\frac{y'}{\Im\omega_1(a+iy')}\right]^\frac12
(\omega_1(a+iy')-\omega_1(a+iy))\left[\frac{y}{\Im\omega_1(a+iy)}\right]^\frac12\\
& & \mbox{}+\left(\left[\frac{y'}{\Im\omega_1(a+iy')}\right]^\frac12-
\left[\frac{y}{\Im\omega_1(a+iy)}\right]^\frac12\right)
\omega_1(a+iy)\left[\frac{y}{\Im\omega_1(a+iy)}\right]^\frac12.
\end{eqnarray*}
Recalling that
$$
\left\|\left[\frac{y'}{\Im\omega_1(a+iy')}\right]^\frac12
(\omega_1(a+iy')-\omega_1(a+iy))\left[\frac{y}{\Im\omega_1(a+iy)}\right]^\frac12
\right\|\leq
|y-y'|
$$
assures us that the middle term on the right hand side of the equality above converges in norm
to zero as $y,y'\to0$. As shown in Lemma \ref{conv} above, so-$\lim_{y\to0}\left[
\frac{y}{\Im\omega_1(a+iy)}\right]^\frac12$ exists and is strictly between 0 and 1. Thus,
$$
\textrm{so -}\lim_{y,y'\to0}\left(\left[\frac{y'}{\Im\omega_1(a+iy')}\right]^\frac12-
\left[\frac{y}{\Im\omega_1(a+iy)}\right]^\frac12\right)=0.
$$
Clearly $\omega_1(a+iy)\left[\frac{y}{\Im\omega_1(a+iy)}\right]^\frac12=
\left[y\Im\omega_1(a+iy)\right]^\frac12+
\Re\omega_1(a+iy)\left[\frac{y}{\Im\omega_1(a+iy)}\right]^\frac12$.
Since $\Re\omega_1(a+iy)\left[\frac{y}{\Im\omega_1(a+iy)}\right]^\frac12$
has been shown to be bounded in \eqref{daleth}, and $y\Im\omega_1(a+iy)$ is known to be
uniformly bounded as $y\to0$ by a universal constant depending only on the first
two moments of $X$ and $Y$, it follows that $\omega_1(a+iy)\left[\frac{y}{\Im\omega_1(a+iy)}
\right]^\frac12$ is uniformly bounded as $y\to0$. Generally, if $a_\iota=a_\iota^*\to 0$ in the so 
topology and $b_\iota$ is uniformly bounded in norm, then $b_\iota a_\iota$ converges
to zero in the so topology. Indeed, for any $\xi\in\mathcal H$, 
$\|b_\iota a_\iota\xi\|_2\le\|b_\iota\| \|a_\iota\xi\|_2\le(\sup_\iota\|b_\iota\|)\|a_\iota\xi\|_2\to0$.
This guarantees that the first term on the right hand side of the equality above converges in the so
topology to zero as $y,y'\to0$. Finally, under the above assumptions, $a_\iota b_\iota\to0$ in the
wo topology: $|\langle a_\iota b_\iota\xi,\eta\rangle|=|\langle b_\iota\xi,a_\iota \eta\rangle|
\leq\|b_\iota\xi\|_2\|a_\iota \eta\|_2\le(\sup_\iota\|b_\iota\|)\|a_\iota\eta\|_2\to0$.
This guarantees that the last term on the right hand side of the equality above converges in the wo
topology to zero as $y,y'\to0$. This shows that the family 
$\left\{\left[\frac{y}{\Im\omega_1(a+iy)}\right]^\frac12
\omega_1(a+iy)\left[\frac{y}{\Im\omega_1(a+iy)}\right]^\frac12\right\}_{y>0}$
is Cauchy, hence convergent in the wo topology. Up to this point, we did not need
the fact that $\mathcal L$ is an Abelian von Neumann algebra.
However, since $\omega_1$ takes values in a commutative algebra, it follows trivially that the 
third (last) term on the right hand side of the above relation converges also in the so topology
to zero, which proves the existence and finiteness of the so limit \eqref{soo}.

Let us denote
$$
\varpi_1^\Re(a):=\lim_{y\searrow0}
\left[\frac{y}{\Im\omega_1(a+iy)}\right]^\frac12
\Re\omega_1(a+iy)\left[\frac{y}{\Im\omega_1(a+iy)}\right]^\frac12=
\lim_{y\searrow0}\frac{y\Re\omega_1(a+iy)}{\Im\omega_1(a+iy)},
$$
and
$$
\varpi_1^\Im(a):=\lim_{y\searrow0}\frac{y}{\Im\omega_1(a+iy)},
$$
where the limits are in the so topology. We need one more lemma in order to be able to state and
complete the proof of our main result.

\begin{lemma}\label{cher}
Consider a family $\{Y_n\}_{n\in\mathbb N}\subset\mathcal A$ of selfadjoint elements uniformly bounded
in norm. Assume that {\rm so-}$\lim_{n\to\infty}Y_n=Y$ and that there exists a sequence $\{y_n\}_{n
\in\mathbb N}\subset(0,1)$ converging to zero and an element 
$r\in\mathcal A\setminus\{0\}$ such that
$$
{\rm wo\text{-}}\lim_{n\to\infty}iy_n(iy_n-Y_n)^{-1}=r.
$$
Then $\ker Y\neq0$. Moreover, $Yr=0=rY$.
\end{lemma}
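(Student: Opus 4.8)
The plan is to first extract from the hypothesis the operator identities $Yr=rY=0$, and then to observe that $\ker Y\neq0$ follows at once. Set $R_n:=iy_n(iy_n-Y_n)^{-1}\in\mathcal A$; since $Y_n=Y_n^*$ we have $\|(iy_n-Y_n)^{-1}\|\le y_n^{-1}$, so $\sup_n\|R_n\|\le1$, and by hypothesis $R_n\to r$ in the weak operator topology. The first thing I would record is a norm estimate: because $Y_n$ commutes with $(iy_n-Y_n)^{-1}$, the resolvent identities $(iy_n-Y_n)R_n=iy_n=R_n(iy_n-Y_n)$ give immediately
$$
Y_nR_n=R_nY_n=iy_n(R_n-1),
$$
whence $\|Y_nR_n\|=\|R_nY_n\|\le y_n(\|R_n\|+1)\le2y_n\to0$. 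Thus both $Y_nR_n$ and $R_nY_n$ tend to $0$ in norm, a fortiori in the weak operator topology.

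Next I would pass to the limit in the products written the other way around, using the elementary fact that multiplication is separately continuous on bounded sets from (strongly convergent selfadjoint)$\times$(weakly convergent) into the weak operator topology: if $a_n=a_n^*\to a$ strongly and $b_n\to b$ weakly with $\sup_n\|b_n\|<\infty$ (hence also $b_n^*\to b^*$ weakly), then for $\xi,\eta\in\mathcal H$ one writes $\langle a_nb_n\xi,\eta\rangle=\langle b_n\xi,a_n\eta\rangle=\langle b_n\xi,a\eta\rangle+\langle b_n\xi,(a_n-a)\eta\rangle$, where the first term converges by weak convergence of $b_n$ and the second is dominated by $\|\xi\|\,(\sup_n\|b_n\|)\,\|(a_n-a)\eta\|\to0$; the statement for $b_na_n\to ba$ is symmetric, using $b_n^*\to b^*$. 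Applying this with $a_n=Y_n$, $a=Y$, $b_n=R_n$, $b=r$ shows $Y_nR_n\to Yr$ and $R_nY_n\to rY$ weakly, and comparing with the norm limits above yields $Yr=0=rY$.

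Finally, since $r\in\mathcal A\setminus\{0\}$ there is a vector $\xi\in\mathcal H$ with $r\xi\neq0$; then $Y(r\xi)=(Yr)\xi=0$, so $r\xi$ is a nonzero vector annihilated by $Y$, i.e. the kernel projection of $Y$ is nonzero, which gives $\ker Y\neq0$. There is no real obstacle in this argument; the only point that deserves a moment's care is the interchange of limit and product in the second step, and there the uniform bound $\|R_n\|\le1$ is precisely the hypothesis that makes the mixed strong/weak continuity of multiplication applicable.
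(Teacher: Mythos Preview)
Your proof is correct and follows essentially the same route as the paper: both arguments show that $Y_nR_n\to0$ and that $Y_nR_n$ and $Yr$ have the same weak limit (via the strong convergence $Y_n\to Y$ and the uniform bound $\|R_n\|\le1$), then deduce $\ker Y\neq0$ from $r\neq0$. The only cosmetic difference is that you obtain $\|Y_nR_n\|\le2y_n$ from the resolvent identity $Y_nR_n=iy_n(R_n-1)$, whereas the paper bounds $\|Y_nR_n\|$ via continuous functional calculus by $\displaystyle\max_{|t|\le I}\frac{y_n|t|}{\sqrt{y_n^2+t^2}}$; both give the same conclusion.
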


\begin{proof}
We claim that $Yr=0$. Indeed, 
\begin{eqnarray*}
Yr & = & Y\lim_{n\to\infty}iy_n(iy_n-Y_n)^{-1}=\lim_{n\to\infty}{iy_nY}(iy_n-Y_n)^{-1}\\
& = & \lim_{n\to\infty}{iy_nY_n}(iy_n-Y_n)^{-1}+{iy_n(Y-Y_n)}{(iy_n-Y_n)^{-1}}.
\end{eqnarray*}
Since $Y_n=Y_n^*$ and there is an $I>0$ such that $\|Y_n\|<I$ for all $n$, by 
continuous functional calculus the first term is bounded in norm by 
$$
\max_{t\in[-I,I]}\left|\frac{iy_nt}{iy_n-t}\right|=\max_{t\in[-I,I]}\left|\frac{y_nt}{\sqrt{y_n^2+t^2}}\right|
=\frac{y_nI}{\sqrt{y_n^2+I^2}}\to0\text{ as }y_n\searrow0.
$$
If $\xi,\eta\in\mathcal H$, then 
\begin{eqnarray*}
\left|\left\langle{iy_n(Y-Y_n)}{(iy_n-Y_n)^{-1}}\xi,\eta\right\rangle\right|
& = &\left|\left\langle{iy_n}{(iy_n-Y_n)^{-1}}\xi,(Y-Y_n)\eta\right\rangle\right|\\
& \leq & \left\|\frac{iy_n}{iy_n-Y_n}\xi\right\|_2\left\|(Y-Y_n)\eta\right\|_2\to0
\end{eqnarray*}
as $y_n\searrow0$, according to our hypothesis that $Y_n\to Y$ in the so topology. We conclude that
$\langle Yr\xi,\eta\rangle=0$ for all $\xi,\eta\in\mathcal H$, so that $Yr=0$ in $\mathcal L$, as claimed.
Since $r\neq0$, any element $\xi\neq0$ which is in the range of $r$ must belong to the kernel of $Y$.

Showing that $rY=0$ is similar. We have:
\begin{eqnarray*}
rY & = & \lim_{n\to\infty}iy_n(iy_n-Y_n)^{-1}Y\\
& = & \lim_{n\to\infty}{iy_nY_n}(iy_n-Y_n)^{-1}+{iy_n}{(iy_n-Y_n)^{-1}(Y-Y_n)}.
\end{eqnarray*}
The first term tends to zero in norm, while the second term, when applied to $\langle\cdot\xi,\eta\rangle$,
is dominated by $\left\|(Y-Y_n)\xi\right\|_2\left\|iy_n(iy_n+Y_n)^{-1}\eta\right\|_2$, which tends to zero.
\end{proof}

Let us state now our main result.

\begin{theorem}\label{main}
Let $X,Y$ be selfadjoint, free over the commutative von Neumann algebra $\mathcal L$. Assume that there
exists a nonzero projection $p$ and $a\in\mathbb R$ such that $(X+Y)p=p(X+Y)=ap$. Denote by
$\omega_1,\omega_2$ Voiculescu's analytic subordination functions associated to $X$ and $Y$,
respectively. Then:
\begin{enumerate}
\item $\ker\left(\sqrt{\varpi_1^\Im(a)}X\sqrt{\varpi_1^\Im(a)}-\varpi_1^\Re(a)\right)\ominus
\ker\varpi_1^\Im(a)\neq\{0\}$;
\item $\ker\left(\sqrt{\varpi_2^\Im(a)}Y\sqrt{\varpi_2^\Im(a)}-\varpi_2^\Re(a)\right)\ominus
\ker\varpi_2^\Im(a)\neq\{0\}$;
\item
\begin{eqnarray*}\lefteqn{
\!E\left[\ker(\sqrt{\varpi_1^\Im(a)}X\sqrt{\varpi_1^\Im(a)}-\varpi_1^\Re(a))\right]
+E\left[\ker(\sqrt{\varpi_2^\Im(a)}Y\sqrt{\varpi_2^\Im(a)}-\varpi_2^\Re(a))\right]}\\
& \geq & E[p]+\Xi,\quad\quad\quad\quad\quad\quad\quad\quad\quad\quad\quad\quad\quad\quad
\quad\quad\quad\quad\quad\quad\quad\quad\quad\quad\quad
\end{eqnarray*}
where $\Xi=\displaystyle\lim_{y\searrow0}
\frac{(\Im G_{X+Y}(a+iy))^2}{(\Re G_{X+Y}(a+iy))^2+(\Im G_{X+Y}(a+iy))^2}$ is an operator
between $\frac{4E[p]}{4E[p]+1}$ and $1$. We have $\Xi=1$ and equality in the above whenever 
$E[p]>0$.
\end{enumerate}
\end{theorem}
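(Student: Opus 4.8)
\emph{Proof proposal.} The computations of Section~3 already contain all that is needed for item~(1). Write $\varpi_{1,y}^{\Im}:=y/\Im\omega_1(a+iy)$ and $A_y:=\sqrt{\varpi_{1,y}^{\Im}}\,(X-\Re\omega_1(a+iy))\,\sqrt{\varpi_{1,y}^{\Im}}\in\mathcal L\langle X\rangle$; by \eqref{daleth} and commutativity of $\mathcal L$ the family $\{A_y\}$ is uniformly bounded in norm and $A_y\to M_1:=\sqrt{\varpi_1^{\Im}(a)}X\sqrt{\varpi_1^{\Im}(a)}-\varpi_1^{\Re}(a)$ in the {\rm so} topology, so \eqref{huh} reads $E_X[p]={\rm so}\text{-}\lim_{y\searrow0}\sqrt{\varpi_{1,y}^{\Im}}\;iy(iy-A_y)^{-1}\,\sqrt{\varpi_{1,y}^{\Im}}$, while \eqref{sieben} gives $E_X[p]={\rm so}\text{-}\lim_{y\searrow0}\sqrt{\varpi_{1,y}^{\Im}}\,y^2(y^2+A_y^2)^{-1}\sqrt{\varpi_{1,y}^{\Im}}$. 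First I would record that $(\varpi_1^{\Re}(a))^2\le C\,\varpi_1^{\Im}(a)$ for a constant $C$ (again from \eqref{daleth} and commutativity), so $\ker\varpi_1^{\Im}(a)\le\ker\varpi_1^{\Re}(a)\cap\ker\sqrt{\varpi_1^{\Im}(a)}\le\ker M_1$ and the difference in~(1) is meaningful. For~(1) itself, extract by weak-$*$ compactness of the unit ball a net (a subsequence when $\mathcal H$ is separable; the proof of Lemma~\ref{cher} applies verbatim to nets) along which $iy(iy-A_y)^{-1}\to r$ in the {\rm wo} topology. Passing the norm convergence $\sqrt{\varpi_{1,y}^{\Im}}\xi\to\sqrt{\varpi_1^{\Im}(a)}\xi$ through the limit yields $\sqrt{\varpi_1^{\Im}(a)}\,r\,\sqrt{\varpi_1^{\Im}(a)}=E_X[p]\ne0$, so $r\ne0$; Lemma~\ref{cher}, applied with $Y_n=A_{y_n}$ along this net, then gives $\ker M_1\ne\{0\}$ and $M_1r=rM_1=0$, i.e.\ $\overline{{\rm ran}\,r}\subseteq\ker M_1$. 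If one had $\ker M_1\le\ker\varpi_1^{\Im}(a)=\ker\sqrt{\varpi_1^{\Im}(a)}$, then $\sqrt{\varpi_1^{\Im}(a)}\,r=0$ and hence $E_X[p]=0$, a contradiction; so $(\ker M_1)\ominus(\ker\varpi_1^{\Im}(a))\ne\{0\}$. Item~(2) is identical with $Y,\omega_2$ in place of $X,\omega_1$.

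For item~(3) I would first identify $\Xi$. By \eqref{Bsubord}, $G_{X+Y}(a+iy)^{-1}=\omega_1(a+iy)+\omega_2(a+iy)-a-iy=:\delta_y+i\sigma_y$ with $\sigma_y=\Im\omega_1(a+iy)+\Im\omega_2(a+iy)-y\ge y$ by \eqref{bigr}, and a computation in the commutative algebra gives $\frac{(\Im G_{X+Y}(a+iy))^2}{(\Re G_{X+Y}(a+iy))^2+(\Im G_{X+Y}(a+iy))^2}=\frac{\sigma_y^2}{\delta_y^2+\sigma_y^2}=:\Xi_y$. Applying $E$ to Remark~\ref{E} gives $iy\,G_{X+Y}(a+iy)\to E[p]$, whence, separating real and imaginary parts, $y\,\Re G_{X+Y}(a+iy)\to0$, $-y\,\Im G_{X+Y}(a+iy)\to E[p]$ and $\frac{y^2}{\delta_y^2+\sigma_y^2}=|iy\,G_{X+Y}(a+iy)|^2\to E[p]^2$. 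Since $G_{X+Y}$ maps $H^+(\mathcal L)$ into $H^-(\mathcal L)$, the map $H:=G_{X+Y}^{-1}$ is a free noncommutative self-map of $H^+(\mathcal L)$, so the argument of Section~3 and Lemma~\ref{conv}, run for $H$ in place of $\omega_1$, produces the {\rm so} limits $\varpi_H^{\Im}:=\lim y/\sigma_y$ (a monotone decreasing limit, by the monotonicity noted in Lemma~\ref{conv}) and $\varpi_H^{\Re}:=\lim y\delta_y/\sigma_y$. Writing $\Xi_y=\frac{\sigma_y^2}{y^2}\cdot\frac{y^2}{\delta_y^2+\sigma_y^2}$, the second factor converges to $E[p]^2$ and $\frac{\sigma_y^2}{y^2}=(y/\sigma_y)^{-2}$ is monotone; from this, together with $\Xi_y\le1$ and $-y\,\Im G_{X+Y}(a+iy)=\Xi_y\cdot\frac{y}{\sigma_y}$, one obtains that $\Xi_y\to\Xi$ with $0\le\Xi\le1$, that $E[p]=\Xi\,\varpi_H^{\Im}\le\varpi_H^{\Im}$, and (using $y\,\Re G_{X+Y}(a+iy)=\frac{y\delta_y}{\sigma_y}\cdot(-\Im G_{X+Y}(a+iy))$ and the boundedness of $y\delta_y/\sigma_y$, which is the analogue of \eqref{daleth} for $H$) that $\varpi_H^{\Re}E[p]=0$ and $(1-\Xi)E[p]=0$, i.e.\ $\Xi=1$ on the support of $E[p]$. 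In particular $\Xi\ge\frac{4E[p]}{4E[p]+1}$ (as $\frac{4t}{4t+1}\le1$ and vanishes at $t=0$) and $\Xi=1$ whenever $E[p]>0$.

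Next comes the subordination identity: dividing $\sigma_y=\Im\omega_1(a+iy)+\Im\omega_2(a+iy)-y$ by $y$ gives $\sigma_y/y=(\varpi_{1,y}^{\Im})^{-1}+(\varpi_{2,y}^{\Im})^{-1}-1$, which in the limit yields $(\varpi_H^{\Im})^{-1}=(\varpi_1^{\Im}(a))^{-1}+(\varpi_2^{\Im}(a))^{-1}-1$ wherever the inverses exist, together with ${\rm supp}\,\varpi_H^{\Im}={\rm supp}\,\varpi_1^{\Im}(a)\cap{\rm supp}\,\varpi_2^{\Im}(a)$ (since $(\varpi_{j,y}^{\Im})^{-1}\to+\infty$ forces $\sigma_y/y\to+\infty$). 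For the inequality, let $P_1$ be any {\rm wo}-subnet limit of $y^2(y^2+A_y^2)^{-1}$; then $P_1\le\ker M_1$ (because $M_1\cdot y^2(y^2+A_y^2)^{-1}\to0$) and $\sqrt{\varpi_1^{\Im}(a)}\,P_1\,\sqrt{\varpi_1^{\Im}(a)}=E_X[p]$, so applying $E$ and using $E[\ker M_1]\in\mathcal L$ and commutativity gives $E[p]\le\varpi_1^{\Im}(a)\,E[\ker M_1]$, and symmetrically $E[p]\le\varpi_2^{\Im}(a)\,E[\ker M_2]$. Adding, in the directions where both $\varpi_j^{\Im}(a)$ are invertible one gets $E[\ker M_1]+E[\ker M_2]\ge((\varpi_1^{\Im})^{-1}+(\varpi_2^{\Im})^{-1})E[p]=((\varpi_H^{\Im})^{-1}+1)E[p]=E[p]+\Xi$ (the last step using $(\varpi_H^{\Im})^{-1}E[p]=\Xi$ on ${\rm supp}\,E[p]$ and $E[p]=0$ off it); in the degenerate directions, $\ker M_j\ge\ker\varpi_j^{\Im}(a)$ forces $E[\ker M_j]=1$ there, which absorbs $E[p]+\Xi\le1$. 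This proves the inequality. Finally, if $E[p]>0$ then $\varpi_H^{\Im}=E[p]$ is invertible, hence so are $\varpi_1^{\Im}(a),\varpi_2^{\Im}(a)$ (by the identity), $\Xi=1$, and all the displayed inequalities become equalities provided $P_j=\ker M_j$; the latter is the operator analogue of the scalar dominated-convergence computation: with $\varpi_1^{\Im}(a)$ invertible one has $\Im\omega_1(a+iy)\to0$, $y^2(y^2+A_y^2)^{-1}=(1+(A_y/y)^2)^{-1}$ with $A_y/y=\Im\omega_1(a+iy)^{-1/2}(X-\Re\omega_1(a+iy))\Im\omega_1(a+iy)^{-1/2}$, and the boundary regularity of $\omega_1$ at $a$ (extracted from \eqref{estim}) forces $A_y=o(y)$ on $\ker M_1$ while $A_y\to M_1$ is bounded below off $\ker M_1$, so this resolvent converges {\rm so} to $\ker M_1$; faithfulness of $E$ then upgrades $E[p]\le\varpi_j^{\Im}(a)E[\ker M_j]$ to equality and equality in~(3) follows.

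The main obstacle is the pair of analytic regularity facts inside item~(3): \emph{first}, the existence of the {\rm so}-limit $\Xi$ on the part of $\mathcal L$ where $E[p]$ (equivalently $\varpi_H^{\Im}$) degenerates — there the ``mass transfer'' from the atom is lost, $\Xi$ may be strictly smaller than $1$, and monotonicity alone does not give convergence, so one must invoke the Lipschitz estimate \eqref{estim} for $H$; \emph{second}, the ``no smearing'' statement $P_j=\ker M_j$ when $\varpi_j^{\Im}(a)$ is invertible, i.e.\ that the renormalized resolvents converge to the spectral projection of $M_j$ at $0$ and not to a strictly smaller positive operator. Everything else is bookkeeping with the commutative functional calculus and the already-established facts \eqref{bigr}, \eqref{estim}, \eqref{daleth}, Lemmas~\ref{pro}, \ref{conv}, \ref{cher} and Remark~\ref{E}; in the regime $E[p]>0$ the whole argument collapses to honest identities.
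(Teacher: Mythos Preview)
Your treatment of items (1) and (2) is essentially the paper's argument: extract a wo-limit $r$ of $iy(iy-A_y)^{-1}$, apply Lemma~\ref{cher} to get $M_1r=rM_1=0$, and derive a contradiction with $E_X[p]\neq0$ if $\ker M_1=\ker\varpi_1^\Im(a)$. Your observation $(\varpi_1^\Re)^2\le C\,\varpi_1^\Im$ (from \eqref{daleth} and commutativity) gives the inclusion $\ker\varpi_1^\Im\subseteq\ker M_1$ more directly than the paper's spectral-radius argument, but the content is the same.

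For the inequality in (3) you take a different but equivalent route. The paper multiplies the imaginary part of \eqref{Bsubord} by $-\Im G_{X+Y}(a+iy)$ and identifies $-\Im\omega_j(a+iy)\,\Im G_{X+Y}(a+iy)=E[\mathcal Z_y^{(j)}]$ directly, obtaining $E[s_1]+E[s_2]=E[p]+\Xi$ as an \emph{identity} before invoking $\ker M_j\ge s_j$. You instead deduce $\varpi_j^\Im(a)\,E[P_j]=E[p]$ from \eqref{sieben} and combine this with $(\varpi_1^\Im)^{-1}+(\varpi_2^\Im)^{-1}=(\varpi_H^\Im)^{-1}+1$ and $E[p]=\Xi\,\varpi_H^\Im$. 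Your $P_j$ is exactly the paper's $s_j$, so the two computations agree where $\varpi_j^\Im$ is invertible, and your case split on degenerate directions is correct. Your route makes the role of the subordination identity more explicit, at the price of the case split; the paper's route avoids the split by never dividing. Your first acknowledged obstacle (existence of the so-limit $\Xi$ where $E[p]$ degenerates) is not actually resolved in the paper either: there too one works along subsequences.

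The genuine gap is the equality case $E[p]>0$. Your sketch (``$A_y=o(y)$ on $\ker M_1$, bounded below off $\ker M_1$, so the resolvent converges to $\ker M_1$'') is not a proof: it presupposes that $\Re\omega_1(a+iy)$ converges to a selfadjoint $\omega_1(a)$ and that $(\Re\omega_1(a+iy)-\omega_1(a))/y\to0$, neither of which follows from \eqref{estim} alone. The paper imports precisely these facts from the operator-valued Julia--Carath\'eodory theorem \cite[Theorem 2.2 and Relation (4.2)]{LMS}, uses them to rewrite \eqref{Bsubord} as
\[
(\omega_1(a+iy)-\omega_1(a))G_X(\omega_1(a+iy))+(\omega_2(a+iy)-\omega_2(a))G_Y(\omega_2(a+iy))=iyG_{X+Y}(a+iy)+1,
\]
and then shows term by term that the $j$th summand converges to $\ker\bigl(\omega_j'(a)^{-1/2}(Z_j-\omega_j(a))\omega_j'(a)^{-1/2}\bigr)$ (with $Z_1=X$, $Z_2=Y$). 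Since $\varpi_j^\Im(a)=\omega_j'(a)^{-1}$ in this regime, this is your $M_j$, and the identity $\ker M_1+\ker M_2=1+E[p]$ drops out. This is exactly your ``no smearing'' obstacle, and it is resolved by an external input you did not name; without it your equality claim is unsupported.
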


\begin{proof}
Let us return to equality \eqref{huh}: we have
\begin{eqnarray}
E_X[p] 
& = & \text{so-}\lim_{y\searrow0} \left[\sqrt{\frac{y}{\Im\omega_1(a+iy)}}\right.\nonumber\\
& & \mbox{}\times iy
\left(iy-\sqrt{\frac{y}{\Im\omega_1(a+iy)}}(X-\Re\omega_1(a+iy))\sqrt{\frac{y}{\Im\omega_1(a+iy)}}\right)^{-1}\nonumber\\
& & \mbox{}\times\left.\sqrt{\frac{y}{\Im\omega_1(a+iy)}}\right].\nonumber
\end{eqnarray}
As shown above, 
$$
\lim_{y\searrow0}
\sqrt{\frac{y}{\Im\omega_1(a+iy)}}(X-\Re\omega_1(a+iy))\sqrt{\frac{y}{\Im\omega_1(a+iy)}}=
\varpi_1^\Im(a)^\frac12X\varpi_1^\Im(a)^\frac12-\varpi_1^\Re(a),
$$
so-convergence to a bounded selfadjoint element. We have also seen that the family
$\sqrt{\frac{y}{\Im\omega_1(a+iy)}}(X-\Re\omega_1(a+iy))\sqrt{\frac{y}{\Im\omega_1(a+iy)}}$
is uniformly bounded in norm as $y\in(0,1)$. 
Since in the above relation \eqref{huh}, $0\leq E_X[p]\neq0$ and 
$\sqrt{\frac{y}{\Im\omega_1(a+iy)}},y>0,$ is bounded from below by the positive nonzero element $E[p]$,
it follows that the middle factor in the right hand side cannot converge to zero. Also, if 
$\ker\varpi_1^\Im(a)\neq0$, then 
$\ker\left(\sqrt{\varpi_1^\Im(a)}X\sqrt{\varpi_1^\Im(a)}-\varpi_1^\Re(a)\right)\ge\ker\varpi_1^\Im(a)$.
Indeed, we may write
\begin{eqnarray*}
\lefteqn{
\sqrt{\frac{y}{\Im\omega_1(a+iy)}}(X-\Re\omega_1(a+iy))\sqrt{\frac{y}{\Im\omega_1(a+iy)}}}\\
& = & \sqrt{\frac{y}{\Im\omega_1(a+iy)}}X\sqrt{\frac{y}{\Im\omega_1(a+iy)}}\\
& & \mbox{}-\left[\frac{y}{\Im\omega_1(a+iy)}\right]^\frac14\\
& & \mbox{}\times\left(\left[\frac{y}{\Im\omega_1(a+iy)}\right]^\frac14
\Re\omega_1(a+iy)\left[\frac{y}{\Im\omega_1(a+iy)}\right]^\frac14\right)
\left[\frac{y}{\Im\omega_1(a+iy)}\right]^\frac14.
\end{eqnarray*}
We recall from \eqref{daleth} that 
$\left\|\Re\omega_1(a+iy)\left[\frac{y}{\Im\omega_1(a+iy)}\right]^\frac12\right\|$ is uniformly
bounded as $y\to0$. Elementary operator theory informs us that the norm of an operator on a 
Hilbert space dominates its spectral radius, with equality for normal elements. Since
$\sigma\!\left(\!\Re\omega_1(a+iy)\left[\frac{y}{\Im\omega_1(a+iy)}\right]^\frac12
\right)\cup\{0\}
=\sigma\!\left(\left[\frac{y}{\Im\omega_1(a+iy)}\right]^\frac14\!
\Re\omega_1(a+iy)\left[\frac{y}{\Im\omega_1(a+iy)}\right]^\frac14\right)\cup\{0\}$, it follows that
the spectral radius, and hence the norm, of the right-hand side, selfadjoint, operator is uniformly
bounded as $y\to0$. Since  the kernel of a positive operator equals the kernel of any of its positive powers, 
we conclude that if $\varpi_1^\Im(a)\xi=0$, then 
$(\sqrt{\varpi_1^\Im(a)}X\sqrt{\varpi_1^\Im(a)}-\varpi_1^\Re(a))\xi=0$.
Since 
$$
\left\|iy\left(iy-\sqrt{\frac{y}{\Im\omega_1(a+iy)}}(X-\Re\omega_1(a+iy))
\sqrt{\frac{y}{\Im\omega_1(a+iy)}}\right)^{-1}\right\|\le1,\quad y>0,
$$
in a von Neumann algebra, there exists a sequence $y_n$ converging to zero so that the above
converges in the weak operator (wo) topology. Choose such a limit point and call it $r$. (Note that,
in this particular case, the adjoint of the above also converges, and necessarily to $r^*$.)
We have established above that 
$\ker(\sqrt{\varpi_1^\Im(a)}X\sqrt{\varpi_1^\Im(a)}-\varpi_1^\Re(a))\ge\ker\varpi_1^\Im(a)$. If
this inequality were an equality, then the inequality 
$\mathrm{ran}(r)\leq\ker(\sqrt{\varpi_1^\Im(a)}X\sqrt{\varpi_1^\Im(a)}-\varpi_1^\Re(a))$
provided by Lemma \ref{cher},
would imply $\mathrm{ran}(r)\leq\ker\varpi_1^\Im(a).$ In particular, we would obtain that
the right-hand side of \eqref{huh} converges to zero\footnote{If the bounded sequence $r_n$ converges
wo to $r$ and the positive sequence $x_n$ converges so to $x$, then $\langle x_nr_n\xi,\eta\rangle=
\langle r_n\xi,x_n\eta\rangle=\langle r_n\xi,(x_n-x)\eta\rangle+\langle r_n\xi,x\eta\rangle$. By
Cauchy-Schwartz, $\langle r_n\xi,(x_n-x)\eta\rangle\to0$, and by hypothesis
$\langle r_n\xi,x\eta\rangle\to\langle r\xi,x\eta\rangle=\langle xr\xi,\eta\rangle$.}, contradicting
the fact that $p$, and hence $E_X[p]$, is non-zero. Thus, necessarily
$$
\ker\left(\sqrt{\varpi_1^\Im(a)}X\sqrt{\varpi_1^\Im(a)}-\varpi_1^\Re(a)\right)\gneqq\ker\varpi_1^\Im(a).
$$
This way, we conclude that 
$$
\ker\left(\sqrt{\varpi_1^\Im(a)}X\sqrt{\varpi_1^\Im(a)}-\varpi_1^\Re(a)\right)\ominus
\ker\varpi_1^\Im(a)\neq\{0\}.
$$
The statement for $\omega_2$ and $Y$ follows the same way.

Let us establish next the relation between the kernels from items (1) and (2). Let us take the imaginary part
in \eqref{Bsubord} (we use commutativity of $\mathcal L$ in an essential way):
$$
\Im\omega_1(a+iy)+\Im\omega_2(a+iy)=y+\frac{-\Im G_{X+Y}(a+iy)}{(\Re G_{X+Y}(a+iy))^2+(\Im G_{X+Y}(a+iy))^2}.
$$
(Recall that $\Im G_{X+Y}(a+iy)<0$.) We multiply with $-\Im G_{X+Y}(a+iy)$ to obtain
\begin{eqnarray*}
\lefteqn{-\Im\omega_1(a+iy)\Im G_{X+Y}(a+iy)-\Im\omega_2(a+iy)\Im G_{X+Y}(a+iy)}\\
& = & -y\Im G_{X+Y}(a+iy)+\frac{(y\Im G_{X+Y}(a+iy))^2}{(y\Re G_{X+Y}(a+iy))^2+(y\Im G_{X+Y}(a+iy))^2}.
\end{eqnarray*}
It is easy to verify that the right hand side converges when $y\searrow0$, at least along a subsequence.
Let us analyze each of the two terms on the left hand side separately:
\begin{eqnarray*}
\lefteqn{-\Im\omega_1(a+iy)\Im G_{X+Y}(a+iy)
 =  -\Im\omega_1(a+iy)\Im G_{X}(\omega_1(a+iy))}\\
& = & \Im\omega_1(a+iy)\times \\
& & \!E\left[(\Im\omega_1(a+iy)+(\Re\omega_1(a+iy)-X)
(\Im\omega_1(a+iy))^{-1}(\Re\omega_1(a+iy)-X))^{-1}\right]\\
& = & \!E\left[\left(1+\left((\Im\omega_1(a+iy))^{-\frac12}(\Re\omega_1(a+iy)-X)
(\Im\omega_1(a+iy))^{-\frac12}\right)^2\right)^{-1}\right]\\
& = & y^2\!E\left[\left(y^2+\left(\sqrt{\frac{y}{\Im\omega_1(a+iy)}}(\Re\omega_1(a+iy)-X)
\sqrt{\frac{y}{\Im\omega_1(a+iy)}}\right)^2\right)^{-1}\right].
\end{eqnarray*}
We recognize under the expectation the square of the selfadjoint shown to so-converge to 
$\sqrt{\varpi_1^\Im(a)}X\sqrt{\varpi_1^\Im(a)}-\varpi_1^\Re(a)$, and  which has been shown to be
uniformly norm-bounded in $y$. Since the square of a bounded family of selfadjoints converges
whenever the family converges\footnote{If $Y_y=Y_y^*\to Y$ so as $y\to0$, then $Y_y^2-Y^2
=(Y-Y_y)Y+Y_y(Y-Y_y)$, and
$\|Y_y(Y_y-Y)\xi\|_2^2=
\langle (Y_y-Y)Y_y^2(Y_y-Y)\xi,\xi\rangle\leq\|Y_y\|^2\|(Y_y-Y)\xi\|_2^2\to0$. The other term
converges trivially to zero.}, we obtain that 
\begin{eqnarray*}\lefteqn{
\text{so-}\lim_{y\searrow0}\left(\sqrt{\frac{y}{\Im\omega_1(a+iy)}}(\Re\omega_1(a+iy)-X)
\sqrt{\frac{y}{\Im\omega_1(a+iy)}}\right)^2}\\
& = & \left(\sqrt{\varpi_1^\Im(a)}X\sqrt{\varpi_1^\Im(a)}-\varpi_1^\Re(a)\right)^2.\quad\quad\quad
\quad\quad\quad\quad\quad\quad
\end{eqnarray*}
The family
$$\mathcal Z_y:=y^2\left(y^2+\left(\sqrt{\frac{y}{\Im\omega_1(a+iy)}}(\Re\omega_1(a+iy)-X)
\sqrt{\frac{y}{\Im\omega_1(a+iy)}}\right)^2\right)^{-1},\ y>0,$$
is uniformly bounded from above by 1 and positive in the von Neumann algebra $\mathcal L\langle X
\rangle$. Pick, as before, a subsequence $y_n\searrow0$ such that $\mathcal Z_{y_n}$ converges 
wo to an element $s_1\ge0$ in $\mathcal L\langle X\rangle$. As in the proof of Lemma \ref{cher},
we have $(\sqrt{\varpi_1^\Im(a)}X\sqrt{\varpi_1^\Im(a)}-\varpi_1^\Re(a))^2s_1
=s_1(\sqrt{\varpi_1^\Im(a)}X\sqrt{\varpi_1^\Im(a)}-\varpi_1^\Re(a))^2=0$. It is quite clear that
$s_1\neq0$. Indeed, that follows from \eqref{sieben} the same way as above. In particular, we have
$\ker  (\sqrt{\varpi_1^\Im(a)}X\sqrt{\varpi_1^\Im(a)}-\varpi_1^\Re(a))^2\ge s_1,$
and so $\ker  (\sqrt{\varpi_1^\Im(a)}X\sqrt{\varpi_1^\Im(a)}-\varpi_1^\Re(a))\ge s_1$.
Since
\begin{eqnarray*}
E[s_1]+E[s_2]
& = &E[p]+\lim_{y\searrow0}\frac{(y\Im G_{X+Y}(a+iy))^2}{(y\Re G_{X+Y}(a+iy))^2+(y\Im G_{X+Y}(a+iy))^2},
\end{eqnarray*}
the inequality in item (3) of our theorem follows from the monotonicity of $E$.
The limit in the right hand side is easily seen to be between $\frac{4E[p]}{4E[p]+1}$ and $1$. Finally,
if $E[p]>0$, then $\Re\omega_1(a+iy)$ converges as $y\to0$ to a selfadjoint $\omega_1(a)$ (see \cite[
Theorem 2.2]{LMS}), and, according to \cite[Relation (4.2)]{LMS}, $(\Re\omega_1(a+iy)-\omega_1(a))/y
\to0$ as $y\to0$. Then \eqref{Bsubord} yields $\omega_1(a)+\omega_2(a)=a$ and thus
$(\omega_1(a+iy)-\omega_1(a))G_{X}(\omega_1(a+iy))+(\omega_2(a+iy)-\omega_2(a))G_{Y}
(\omega_2(a+iy))=iyG_{X+Y}(a+iy)+1$. We write $(\omega_1(a+iy)-\omega_1(a))G_{X}(\omega_1(a+iy))
=iyG_{X+Y}(a+iy)(\Re\omega_1(a+iy)-\omega_1(a))/y+i\Im\omega_1(a+iy)G_{X}(\omega_1(a+iy))$.
The first term tends to zero. We claim that the second converges to $\ker(\omega_1'(a)^{-\frac12}
(X-\Re\omega_1(a))\omega_1'(a)^{-\frac12})$ (just here, we agree to denote $\omega_1'(a)(1)$
by $\omega_1'(a)$). Indeed,
\begin{eqnarray*}
\lefteqn{i\Im\omega_1(a+iy)G_{X}(\omega_1(a+iy))-iyE\left[\left(iy-\omega_1'(a)^{-\frac12}
(X-\Re\omega_1(a))\omega_1'(a)^{-\frac12}\right)^{-1}\right]}\\
& = &
i\Im\omega_1(a+iy)G_{X}(\omega_1(a+iy))-iy\omega_1'(a)E\left[\left(iy\omega_1'(a)-
(X-\Re\omega_1(a))\right)^{-1}\right]\\
& = & \left[\frac{\Im\omega_1(a+iy)}{y}-\omega_1'(a)\right]iyG_{X+Y}(a+iy)\\
& &\mbox{}+\omega_1'(a)iyE\left[\left(\omega_1(a+iy)-X\right)^{-1}\left(iy\omega_1'(a)-
X+\Re\omega_1(a)\right.\right.\\
& & \mbox{}\left.\left.-\Re\omega_1(a+iy)-i\Im\omega_1(a+iy)+X\right)
\left(iy\omega_1'(a)-
(X-\Re\omega_1(a))\right)^{-1}\right].
\end{eqnarray*}
As shown in \cite[Theorem 2.2]{LMS}, $\frac{\Im\omega_1(a+iy)}{y}$ increases to $\omega'_1(a)$
as $y\searrow0$ (convergence in so topology) and $iyG_{X+Y}(a+iy)\to E[p]$, so the first term goes to
zero. Next, $iy\left(\omega_1(a+iy)-X\right)^{-1}\left[\frac{\Im\omega_1(a+iy)}{y}\,-\,\omega_1'(a)\right]
iy\left(iy\omega_1'(a)-(X-\Re\omega_1(a))\right)^{-1}$ has the first and third factors 
bounded, while the middle one converges to zero in the so topology. Finally, precisely the same statement 
holds for the last product, namely
$iy\left(\omega_1(a+iy)-X\right)^{-1}\frac{\Re\omega_1(a)-\Re\omega_1(a+iy)}{iy}
iy\left(iy\omega_1'(a)-(X-\Re\omega_1(a))\right)^{-1}$. Thus, the above tends to zero in the so
topology, guaranteeing that 
$$
\ker(\omega_1'(a)^{-\frac12}
(X-\Re\omega_1(a))\omega_1'(a)^{-\frac12})+\ker(\omega_2'(a)^{-\frac12}
(Y-\Re\omega_2(a))\omega_2'(a)^{-\frac12})\!=\!1+E[p].
$$
\end{proof}
Note that, under the very favourable hypothesis on $E[p]$, discussed in Remark \ref{3.1}, the result above, 
and its proof, closely parallels the corresponding result and proof from \cite{BVReg}. This seems to justify 
the statement that the Julia-Carath\'eodory derivative is an important tool in the understanding of invariant
projections for sums of random variables which are free over a von Neumann algebra.

\end{document}